\newtheorem{theorem} {{\textsf{Theorem}}}
\newtheorem{proposition}[theorem]{{\textsf{Proposition}}}
\newtheorem{corollary}[theorem]{{\textsf{Corollary}}}
\newtheorem{definition}[theorem]{{\textsf{Definition}}}
\newtheorem{remark}[theorem]{{\textsf{Remark}}}
\newtheorem{lemma}[theorem]{{\textsf{Lemma}}}
\begin{document}
\title{Genus-minimal crystallizations of PL 4-manifolds}

\author{Biplab Basak}
\date{}
\maketitle
\vspace{-10mm}
\begin{center}

\noindent {\small Department of Mathematics, Cornell University, Ithaca, NY 14853, USA.}

\noindent {\small {\em E-mail address:} \url{bb564@cornell.edu}}

\medskip

\date{February 10, 2018}
\end{center}
\hrule

\begin{abstract}
For $d\geq 2$, the regular genus of a closed connected PL $d$-manifold $M$ is the least genus (resp., half of genus) of an
orientable (resp., a non-orientable) surface into which a crystallization of $M$ imbeds regularly. The regular genus of every orientable surface equals its genus, and the regular genus of every 3-manifold equals its Heegaard genus. For every closed connected PL $4$-manifold $M$, it is known that its regular genus $\mathcal G(M)$ is at least $2 \chi (M) + 5m -4$, where $m$ is the rank of the fundamental group of $M$. In this article, we introduce the concept of ``weak semi-simple crystallization" for every closed connected PL $4$-manifold $M$, and prove that  $\mathcal G(M)=  2 \chi (M) + 5m -4$  if and only if $M$ admits a weak semi-simple crystallization. We then show that the PL invariant regular genus is additive under  the connected sum within the class of all PL 4-manifolds admitting a weak semi-simple crystallization. Also, we note that this property is related to 
the 4-dimensional Smooth Poincar\'e Conjecture.
\end{abstract}

\noindent {\small {\em MSC 2010\,:} Primary 57Q15. Secondary 57Q05; 57N13; 05C15.

\noindent {\em Keywords:} PL 4-manifolds; Crystallizations; Weak semi-simple crystallizations; Regular genus.

}

\medskip

\hrule
\section{Introduction and Main Results}
First, let us recall that a crystallization of a PL-manifold is a contracted colored graph which represents the manifold (for details and related notations see Subsection \ref{crystal}). The existence of crystallizations for every closed connected PL-manifold is ensured by a classical theorem due to Pezzana (see \cite{pe74}, or \cite{fgg86} for subsequent generalizations).

Let $(\Gamma, \gamma)$ be a $(d+1)$-regular colored graph (i.e., through each vertex of the graph, there are  $d+1$ edges with different colors from the color set $\Delta_d:=\{0, \dots, d\}$). An embedding $i : \Gamma \hookrightarrow F$ of $\Gamma$ into a closed surface $F$ is called {\it regular} if there exists a cyclic permutation $\varepsilon=(\varepsilon_0, \varepsilon_1, \dots, \varepsilon_d)$ of the color set $\Delta_d$, such that the boundary of each face of $i(\Gamma)$ is a bi-colored cycle with edges alternately colored by $\varepsilon_j, \varepsilon_{j+1}$ for some $j$ (the addition is modulo $d+1$). Then,
the {\it regular genus} $\rho (\Gamma)$ of $(\Gamma,\gamma)$ is the least genus (resp., half of genus) of an
orientable (resp., a non-orientable) surface into which $\Gamma$ embeds regularly, and the {\it regular genus} $\mathcal G (M)$ of a closed connected PL $d$-manifold $M$ is defined as the minimum regular genus of its crystallizations. Note that the notion of regular genus extends classical notions to arbitrary dimension. In fact, the regular genus of a closed connected orientable (resp., non-orientable) surface coincides with its genus (resp., half of its genus), while the regular genus of a closed connected 3-manifold coincides with its Heegaard genus (see  \cite{ga81}).

The invariant regular genus  has been intensively studied, yielding some important general results. For example, regular genus zero characterizes the $d$-sphere among all closed connected PL $d$-manifolds (cf. \cite{[FG$_2$]}). 
In particular, in dimension $d \in \{4,5\}$, a lot of classifying results in PL-category have been obtained, both for closed and bounded PL $d$-manifolds (see, for example, \cite{[C],{Casali},[CG]}). In fact, many authors give  bounds for regular genus of some PL $d$-manifolds (cf. \cite{cs7,cr4,fg82,gg93,sp99}).

Let $(\Gamma,\gamma)$ be a crystallization (with color set $\Delta_4$) of a closed connected PL $4$-manifold $M$ and let $m$ be the rank (the minimum size of a generating set) of the fundamental group of $M$.
Let $g_{\{i,j,k\}}$ be the number of connected components of the subgraph of $(\Gamma,\gamma)$ restricted in the color set $\{i,j,k\}$. Then, by a result of Gagliardi (cf. Proposition \ref{prop:preliminaries} $(b)$), $g_{\{i,j,k\}} \geq m+1$ for any distinct $i,j,k \in \Delta_4$. A crystallization $(\Gamma,\gamma)$ of $M$ is called a {\em semi-simple crystallization} if $g_{\{i,j,k\}}=m+1$ for all $i,j,k \in \Delta_4$. Now, generalizing the notion of semi-simple crystallization, we have defined the following.

\begin{definition}\label{def:weak}
{\rm Let $M$ be a closed connected PL 4-manifold and $m$ be the rank of the fundamental group of $M$. A crystallization $(\Gamma,\gamma)$ of $M$ with color set $\Delta_4$ is said to be {\em a weak semi-simple crystallization} if $g_{\{i,i+1,i+2\}}=m+1$ for $i \in \Delta_4$ (addition in subscript of $g$ is modulo 5).}
\end{definition} 

From \cite{bc15} we know that, for a closed connected PL $4$-manifold $M$, its regular genus $ \mathcal G(M) \ \ge \ 2 \chi (M) + 5m -4$, and equality holds if $M$ admits a semi-simple crystallization, where $m$ is the rank of the fundamental group of $M$. Here, we have proved the following. 

\begin{theorem}\label{teorem:main}
Let $M$ be a closed connected PL $4$-manifold and $m$ be the rank of its fundamental group. Then $\mathcal G(M)=  2 \chi (M) + 5m -4$  if and only if $M$ admits a weak semi-simple crystallization. 
\end{theorem}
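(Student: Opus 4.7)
The plan is to regard the theorem as a sharpening of the Basak--Casali lower bound $\mathcal G(M)\ge 2\chi(M)+5m-4$ from \cite{bc15}, tracking exactly which combinatorial data is forced to be extremal. Given a crystallization $(\Gamma,\gamma)$ of order $2p$ and a cyclic permutation $\varepsilon=(\varepsilon_0,\ldots,\varepsilon_4)$ of $\Delta_4$, the Euler formula applied to the regular embedding of $\Gamma$ into $F_\varepsilon$ yields
\[
2\rho_\varepsilon(\Gamma)\;=\;2+3p-\sum_{i\in\mathbb Z_5}g_{\{\varepsilon_i,\varepsilon_{i+1}\}},
\]
so that $\mathcal G(M)$ is controlled by the five bicolored sums on pairs that are \emph{consecutive} with respect to $\varepsilon$; these pairs are exactly the ones contained in the five \emph{cyclic} triples $\{\varepsilon_{i-1},\varepsilon_i,\varepsilon_{i+1}\}$.

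The next step is to re-examine the derivation of the lower bound in \cite{bc15}. That proof estimates each $g_{\{\varepsilon_i,\varepsilon_{i+1}\}}$ in terms of a symmetric combination of the tricolored counts $g_{\{\varepsilon_{i-1},\varepsilon_i,\varepsilon_{i+1}\}}$ for the two cyclic triples containing $\{\varepsilon_i,\varepsilon_{i+1}\}$, and then substitutes the Gagliardi inequality $g_{\{\varepsilon_{i-1},\varepsilon_i,\varepsilon_{i+1}\}}\ge m+1$ (Proposition \ref{prop:preliminaries}$(b)$). I would isolate this chain as an identity of the form
\[
\rho_\varepsilon(\Gamma)\;=\;\bigl[2\chi(M)+5m-4\bigr]\;+\;\sum_{i\in\mathbb Z_5}c_i\,\bigl(g_{\{\varepsilon_{i-1},\varepsilon_i,\varepsilon_{i+1}\}}-(m+1)\bigr),
\]
with nonnegative coefficients $c_i$. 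The crucial observation to verify is that the only 3-color subsets that appear on the right-hand side are the \emph{five cyclic triples} of $\varepsilon$; the other five (non-cyclic) 3-subsets of $\Delta_4$ play no role in the lower-bound argument.

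Given this refined identity, the ``if'' direction is immediate: take a weak semi-simple crystallization and label its colors so that $g_{\{i,i+1,i+2\}}=m+1$ for every $i\in\mathbb Z_5$, then choose $\varepsilon=(0,1,2,3,4)$. Every bracketed term vanishes and $\rho_\varepsilon(\Gamma)=2\chi(M)+5m-4$, which together with the reverse Basak--Casali inequality forces $\mathcal G(M)=2\chi(M)+5m-4$. For the ``only if'' direction, suppose $\mathcal G(M)=2\chi(M)+5m-4$, realized by some crystallization $(\Gamma,\gamma)$ and cyclic permutation $\varepsilon$ with $\rho_\varepsilon(\Gamma)=\mathcal G(M)$. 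Because each $c_i$ is nonnegative and each $g_{\{\varepsilon_{i-1},\varepsilon_i,\varepsilon_{i+1}\}}-(m+1)$ is nonnegative, every term on the right-hand side must vanish, giving $g_{\{\varepsilon_{i-1},\varepsilon_i,\varepsilon_{i+1}\}}=m+1$ for all $i\in\mathbb Z_5$. Relabeling the color set via the permutation $\varepsilon_j\mapsto j$ transports these five equalities into the weak semi-simple condition of Definition \ref{def:weak}.

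The main obstacle is verifying that the argument of \cite{bc15} actually factors through exactly the five cyclic Gagliardi bounds and nothing more, and that each intermediate estimate is sharp under the weak semi-simple hypothesis. Concretely, one must show that the passage from $g_{\{\varepsilon_i,\varepsilon_{i+1}\}}$ to the neighboring $g_{\{\varepsilon_{i-1},\varepsilon_i,\varepsilon_{i+1}\}}$ uses a combinatorial identity (coming, e.g., from the structure of the 3-residue graph representing the link of the edge labeled by the complementary color pair) whose only slack is the Gagliardi gap, so that the final inequality degenerates to an equality precisely on cyclic triples achieving the minimum. Once this bookkeeping is made explicit, both directions of the theorem follow simultaneously.
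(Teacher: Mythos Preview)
Your overall architecture is right: express $\rho_\varepsilon(\Gamma)$ as the Basak--Casali lower bound plus a nonnegative defect, observe that equality forces the defect to vanish, and relabel colors to land on Definition~\ref{def:weak}. But the specific identity you posit is wrong, and the error propagates through both directions.

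The paper proves (Lemma~\ref{lemma:main1}) that
\[
\rho_\varepsilon(\Gamma)\;=\;2\chi(M)+5m-4\;+\;\sum_{i\in\mathbb Z_5}\bigl(g_{\{\varepsilon_i,\varepsilon_{i+2},\varepsilon_{i+4}\}}-(m+1)\bigr),
\]
so the defect is indexed by the five \emph{non-consecutive} triples with respect to $\varepsilon$, not by the consecutive triples $\{\varepsilon_{i-1},\varepsilon_i,\varepsilon_{i+1}\}$ as you claim. Your heuristic that ``$g_{\{\varepsilon_i,\varepsilon_{i+1}\}}$ passes to the neighboring $g_{\{\varepsilon_{i-1},\varepsilon_i,\varepsilon_{i+1}\}}$'' is too optimistic: the identity $2g_{ijk}=g_{ij}+g_{ik}+g_{jk}-p$ ties each consecutive pair to \emph{every} triple containing it, including the non-consecutive ones, and the actual computation (the paper inverts the full $10\times 10$ linear system) shows that after cancellation only the non-consecutive triples survive, each with coefficient~$1$.

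This is not cosmetic. With your identity, choosing $\varepsilon=(0,1,2,3,4)$ in the ``if'' direction would kill the defect for a weak semi-simple crystallization; with the correct identity it does not, because the weak semi-simple hypothesis says nothing about $g_{\{0,2,4\}}$, $g_{\{0,1,3\}}$, etc. The paper instead takes $\varepsilon=(2,0,3,1,4)$, so that the $\varepsilon$-non-consecutive triples $\{\varepsilon_i,\varepsilon_{i+2},\varepsilon_{i+4}\}$ become exactly the standard-consecutive triples $\{j,j+1,j+2\}$. Likewise, in the ``only if'' direction the relabeling is $(\varepsilon_0,\varepsilon_2,\varepsilon_4,\varepsilon_1,\varepsilon_3)\mapsto(0,1,2,3,4)$ rather than $\varepsilon_j\mapsto j$. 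Once you correct the indices and supply the linear-algebra computation behind Lemma~\ref{lemma:main1}, your outline becomes the paper's proof.
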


We have also shown that the class of PL 4-manifolds admitting weak semi-simple crystallizations is closed under connected sum (cf. Lemma \ref{Theorem:additive}). Hence, PL $4$-manifolds admitting weak semi-simple crystallizations actually constitutes a huge class  which comprehends  $S^4$, $\mathbb{CP}^{2}$, $S^{2} \times S^{2}$,  $\mathbb{RP}^4$, $\mathbb{RP}^2\times S^2$,  the orientable and non-orientable $S^3$-bundles over $S^1$, an orientable and a non-orientable $(S^2\times S^1)$-bundles over $S^1$, and the $K3$-surface, together with their connected sums, possibly by taking copies with reversed orientation (cf. Remark \ref{rem:huge-class}).

The additivity of regular genus under connected sum has been conjectured for closed connected PL $d$-manifolds. Moreover, the associated (open) problem is significant especially in dimension four. In fact, in dimension four, additivity of regular genus, at least in the simply-connected case, would imply the 4-dimensional Smooth Poincar\'e Conjecture, in virtue of a well-known Wall's Theorem (cf. \cite{[W]}). Here, we have proved the additivity of  regular genus for the class of PL $4$-manifolds admitting weak semi-simple crystallizations.

\begin{theorem}\label{theorem:addative}
Let $M_1$ and $M_2$ be two PL $4$-manifolds admitting weak semi-simple crystallizations. Then $\mathcal{G}(M_1\# M_2) = \mathcal{G}(M_1)+\mathcal{G}(M_2)$.
\end{theorem}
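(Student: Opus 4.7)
The plan is to reduce the theorem to three ingredients: the closure of the weak semi-simple class under connected sum (Lemma \ref{Theorem:additive}), the equality $\mathcal{G}(M) = 2\chi(M)+5m-4$ characterizing this class (Theorem \ref{teorem:main}), and two standard facts about how $\chi$ and $\mathrm{rk}(\pi_1)$ behave under connected sum. Since both directions of the additivity inequality boil down to a simple arithmetic identity once these inputs are in place, the proof reduces to a short bookkeeping argument.

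First I would apply Lemma \ref{Theorem:additive} to conclude that $M_1\#M_2$ itself admits a weak semi-simple crystallization. This is the key structural input; it guarantees that the characterization of Theorem \ref{teorem:main} applies to all three manifolds $M_1$, $M_2$, and $M_1\#M_2$ simultaneously. Next, writing $m_i$ for the rank of $\pi_1(M_i)$, I would invoke van Kampen's theorem (valid because the connecting $3$-sphere is simply connected and removing an open $4$-ball from a closed $4$-manifold does not change $\pi_1$) to identify
\[
\pi_1(M_1\#M_2)\cong \pi_1(M_1)\ast \pi_1(M_2),
\]
and then cite Grushko's theorem to obtain $\mathrm{rk}(\pi_1(M_1\#M_2)) = m_1+m_2$. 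Combined with the standard identity $\chi(M_1\#M_2) = \chi(M_1)+\chi(M_2)-2$ (since $\chi(S^4)=2$ in dimension four), these give the needed inputs.

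The final step is a direct computation. Applying Theorem \ref{teorem:main} to $M_1\#M_2$ and substituting the two additivity formulas above,
\[
\mathcal{G}(M_1\#M_2) = 2\chi(M_1\#M_2) + 5\,\mathrm{rk}(\pi_1(M_1\#M_2)) - 4 = 2\chi(M_1)+2\chi(M_2)-4+5m_1+5m_2-4,
\]
which regroups as $(2\chi(M_1)+5m_1-4)+(2\chi(M_2)+5m_2-4)$. A second application of Theorem \ref{teorem:main}, now to each $M_i$ separately, rewrites this sum as $\mathcal{G}(M_1)+\mathcal{G}(M_2)$, yielding the desired equality.

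The main obstacle in this chain is not the arithmetic but the closure statement Lemma \ref{Theorem:additive}: one must verify that a suitable graph-theoretic connected-sum construction on the two given weak semi-simple crystallizations produces a crystallization of $M_1\#M_2$ in which all five $\{i,i+1,i+2\}$-residues still attain the minimum value $m_1+m_2+1$ predicted by Gagliardi's bound. Once that lemma is in hand, the additivity theorem follows essentially for free. Note also that, unlike the general additivity conjecture, we do not need to argue that the connected-sum construction achieves the minimum regular genus of $M_1\#M_2$ directly, since Theorem \ref{teorem:main} converts the question into an identity involving only $\chi$ and $\mathrm{rk}(\pi_1)$.
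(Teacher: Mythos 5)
Your proof is correct and follows essentially the same route as the paper: the paper likewise combines Lemma \ref{Theorem:additive}, Theorem \ref{teorem:main} (via Corollary \ref{corollary:genus}), and the identity $\chi(M_1\# M_2)=\chi(M_1)+\chi(M_2)-2$. You are merely more explicit than the paper in justifying $\mathrm{rk}(\pi_1(M_1\# M_2))=m_1+m_2$ via van Kampen and Grushko, which the paper uses tacitly.
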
 
We have also shown some interesting consequences of the above theorems (cf. Corollaries \ref{cor:min-genus}, \ref{corollary:novik-swartz} and \ref{corollary:genus}, and Remarks \ref{rem:huge-class} and \ref{rem:conjecture}).
\section{Preliminaries}

\subsection{Crystallizations} \label{crystal}
A multigraph $\Gamma = (V(\Gamma), E(\Gamma))$ is a finite connected graph, with vertex-set $V(\Gamma)$ and edge-set $E(\Gamma)$,  which can have multiple edges but no loops. A multigraph $\Gamma$ is called {\em $(d+1)$-regular} if the number of edges adjacent to each vertex is $(d+1)$. For standard terminology on graphs, we refer to \cite{bm08}. A {\it $(d+1)$-colored graph} is a pair
$(\Gamma,\gamma)$, where $\Gamma= (V(\Gamma),$ $E(\Gamma))$ is a multigraph of degree at most $d+1$ and the surjective map $\gamma : E(\Gamma) \to \Delta_d:=\{0,1, \dots , d\}$ is a proper edge-coloring (i.e., $\gamma(e) \ne \gamma(f)$ for any pair $e,f$ of adjacent edges). The elements of the set $\Delta_d$ are called the {\it colors} of
$\Gamma$.

For each $B \subseteq \Delta_d$ with $h$ elements, the
graph $\Gamma_B =(V(\Gamma), \gamma^{-1}(B))$ is an $h$-colored graph with edge-coloring $\gamma|_{\gamma^{-1}(B)}$.
If $\Gamma_{\Delta_d \setminus\{c\}}$ is connected for all $c\in \Delta_d$, then  $(\Gamma,\gamma)$ is called {\em contracted}. For a color set $\{i_1,i_2,\dots,i_k\} \subset \Delta_d$, $g_{\{i_1,i_2, \dots, i_k\}}$ denotes the number of connected components of the graph $\Gamma_{\{i_1, i_2, \dots, i_k\}}$.

Each $(d+1)$-colored graph uniquely determines a $d$-dimensional simplicial cell-complex ${\mathcal K}(\Gamma)$, which is said to be {\it associated to $\Gamma$}:
\begin{itemize}
\item{} for every vertex $v\in V(\Gamma)$, take a $d$-simplex $\sigma(v)$ and label injectively its $d+1$ vertices by the colors of $\Delta_d$;
\item{} for every  edge of color $i$ between $v,w\in V(\Gamma)$, identify the ($d-1$)-faces of $\sigma(v)$ and $\sigma(w)$ opposite to $i$-labelled vertices, so that equally labelled vertices coincide.
\end{itemize}
The vector $f({\mathcal K}(\Gamma)) := (f_0 ({\mathcal K}(\Gamma)), f_1 ({\mathcal K}(\Gamma)), \ldots , f_d ({\mathcal K}(\Gamma)))$ is called the {\em $f$-vector} of ${\mathcal K}(\Gamma)$ where $f_i({\mathcal K}(\Gamma))$ is the number of $i$-simplices in ${\mathcal K}(\Gamma)$. If the geometrical carrier of ${\mathcal K}(\Gamma)$ is PL-homeomorphic to a PL $d$-manifold $M$, then the $(d+1)$-colored graph $(\Gamma,\gamma)$ is said to {\it represent} $M$; if, in addition, $(\Gamma,\gamma)$ is contracted, then it is called a {\it crystallization} of $M$. If $(\Gamma, \gamma)$ is a crystallization of a connected PL $d$-manifold $M$, then the number of vertices in ${\mathcal K}(\Gamma)$ is $d+1$. If $(\Gamma,\gamma)$ is a crystallization of a closed connected PL $d$-manifold then $(\Gamma,\gamma)$ is a $(d+1)$-regular colored graph, i.e., a $(d+1)$-colored graph which is also $(d+1)$-regular.

Let $(\Gamma_1,\gamma_1)$ and $(\Gamma_2,\gamma_2)$ be two disjoint $(d+1)$-colored graphs with the same color set $\Delta_d$, and let $v_i \in V_i$ ($1\leq i\leq 2$).
The {\em connected sum} of $\Gamma_1$, $\Gamma_2$  with respect to vertices $v_1, v_2$ (denoted by $(\Gamma_1\#_{v_1v_2}\Gamma_2, \gamma_1 \#\gamma_2)$, or simply $(\Gamma_1\#\Gamma_2, \gamma_1 \#\gamma_2)$) is the
graph obtained from $(\Gamma_1 \setminus\{v_1\}) \sqcup (\Gamma_2 \setminus \{v_2\})$ by adding $d+1$ new edges $e_0,
\dots, e_d$ with colors $0, \dots, d$ respectively, such that the end points of $e_j$ are $u_{j,1}$ and $u_{j,
2}$, where $v_i$ and $u_{j,i}$ are joined in $(\Gamma_i,\gamma_i)$ with an edge of color $j$ for $0\leq j\leq d$, $1\leq
i\leq 2$. From \cite{fgg86}, we know the following.

\begin{proposition} \label{prop:preliminaries}
For $d\geq 3$, let  $(\Gamma,\gamma)$ be a crystallization of a PL $d$-manifold $M$.
\begin{itemize}
\item[(a)] Let $\nu(\Gamma)$ denote the number of vertices of $\Gamma$. Then,   $2g_{\{i,j,k\}}=g_{\{i,j\}}+g_{\{i,k\}}+g_{\{j,k\}}-\frac{\nu(\Gamma)}{2}$ for any distinct $i,j,k \in \Delta_d$.
\item[(b)] For any distinct $i, j \in \Delta_d$, the set of all connected components of  $\Gamma_{\Delta_d \setminus \{i,j\}}$, but one, is in bijection with a set of generators of the fundamental group $\pi_1(M).$
\item[(c)] If $(\Gamma^{\prime},\gamma^{\prime})$ is a crystallization of a PL $d$-manifold $M^{\prime},$ then the graph connected sum $(\Gamma \# \Gamma^{\prime}, \gamma \# \gamma^{\prime})$ is a crystallization of $M \# M^{\prime}$.
 \end{itemize}
\end{proposition}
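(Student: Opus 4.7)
The plan is to address the three parts (a), (b), (c) separately, using throughout the standard dictionary between a $(d+1)$-colored graph $(\Gamma,\gamma)$ and its associated simplicial cell-complex $\mathcal K(\Gamma)$. The underlying fact I rely on is that, for any $B\subseteq\Delta_d$, each connected component of $\Gamma_B$ is itself a $|B|$-colored graph representing the link of a $(d-|B|)$-simplex of $\mathcal K(\Gamma)$ with vertex-label set $\Delta_d\setminus B$; since $M$ is a closed PL $d$-manifold, every such link is a PL sphere of the expected dimension.

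For part~(a), fix distinct $i,j,k\in\Delta_d$ and let $\Lambda$ be any connected component of $\Gamma_{\{i,j,k\}}$; write $\nu(\Lambda)$ for its vertex count and $g_{\{p,q\}}(\Lambda)$ for the number of its $\{p,q\}$-bicolored cycles. Since $\Lambda$ represents the link of a $(d-3)$-simplex (an $S^2$, as $d\ge 3$), $\chi(\mathcal K(\Lambda))=2$. Reading off $F_0$, $F_1$, $F_2$ directly from $\Lambda$ (one $2$-simplex per vertex of $\Lambda$, one edge per edge of $\Lambda$, one $0$-simplex per bicolored cycle) gives
\[
g_{\{i,j\}}(\Lambda)+g_{\{i,k\}}(\Lambda)+g_{\{j,k\}}(\Lambda)-\tfrac{3}{2}\nu(\Lambda)+\nu(\Lambda)=2;
\]
summing this identity over the $g_{\{i,j,k\}}$ components of $\Gamma_{\{i,j,k\}}$ yields (a).

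For part~(b), I would pass to the dual cell decomposition of $\mathcal K(\Gamma)$, whose $2$-skeleton $X$ satisfies $\pi_1(X)=\pi_1(M)$: its $0$-cells are the vertices of $\Gamma$, its $1$-cells are the (colored) edges of $\Gamma$, and its $2$-cells are the bicolored cycles. Pick a spanning forest of $\Gamma_{\Delta_d\setminus\{i,j\}}$ (one spanning tree per connected component) and extend it to a spanning tree $T'$ of $\Gamma$ by adjoining $g_{\Delta_d\setminus\{i,j\}}-1$ further edges of color $i$ or $j$ bridging the components. The Tietze presentation of $\pi_1(X)$ obtained from $T'$ has free generators indexed by the non-tree $1$-cells; using the $2$-cell relators coming from bicolored $\{p,q\}$-cycles with $p,q\in\Delta_d\setminus\{i,j\}$, I would eliminate, one by one, every generator corresponding to a non-tree edge of $\Gamma_{\Delta_d\setminus\{i,j\}}$. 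What remains is a presentation whose generators are exactly the $g_{\Delta_d\setminus\{i,j\}}-1$ bridging edges, which are in natural bijection with ``all components of $\Gamma_{\Delta_d\setminus\{i,j\}}$ but one''.

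For part~(c), I would verify the claim directly from the construction. Removing the vertex $v_i$ from $(\Gamma_i,\gamma_i)$ corresponds geometrically to removing the open $d$-simplex $\sigma(v_i)$ from $\mathcal K(\Gamma_i)$, exposing a $(d-1)$-sphere boundary whose $(d-1)$-faces are exactly those that were previously identified (via the edges of $\Gamma_i$ incident with $v_i$) with the $(d-1)$-face of the neighbour $\sigma(u_{j,i})$ opposite the $j$-labelled vertex. The new edge $e_j$ between $u_{j,1}$ and $u_{j,2}$ identifies these two $(d-1)$-faces pairwise across the two pieces, which is precisely the boundary identification defining the PL connected sum $M_1\#M_2$; a direct inspection then shows that $(\Gamma_1\#\Gamma_2,\gamma_1\#\gamma_2)$ is $(d+1)$-regular, properly edge-colored, and contracted, hence a crystallization of $M_1\#M_2$. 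The main obstacle is the Tietze elimination in~(b): one must show that the chosen $2$-cell relators really do eliminate the intended generators in a valid sequence, a fact that ultimately rests on the sphericity (hence simple-connectivity) of the links of edges in a closed PL $d$-manifold.
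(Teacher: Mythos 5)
The paper never proves this proposition: it is imported verbatim from the survey \cite{fgg86} (part (b) being Gagliardi's presentation of the fundamental group, part (c) the classical connected-sum result), so there is no internal proof to compare against and your attempt has to stand on its own. Parts (a) and (c) are essentially the standard arguments and are sound: in (a) the Euler-characteristic count over the components of $\Gamma_{\{i,j,k\}}$ (each representing $S^2$) is exactly right; in (c) your sketch is correct in substance, though the ``direct inspection'' of contractedness is the one point that needs an argument --- e.g.\ for any two colors $j,k\neq c$ the $\{j,k\}$-cycle through $v_i$ yields a path from $u_{j,i}$ to $u_{k,i}$ avoiding $v_i$, so deleting $v_i$ does not disconnect $(\Gamma_i)_{\Delta_d\setminus\{c\}}$ --- and the usual orientation caveat for $M\# M'$ is glossed over.

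Part (b), however, has a genuine gap, and it is not just the ``valid elimination sequence'' you flag. With your spanning tree $T'$ the bridging edges are \emph{tree} edges, so they contribute no generators at all; the free generators of the presentation are the non-tree edges. The relators you invoke (bicolored $\{p,q\}$-cycles with $p,q\in\Delta_d\setminus\{i,j\}$) can at best eliminate the non-tree edges lying inside $\Gamma_{\Delta_d\setminus\{i,j\}}$ (this is legitimate: each component together with its own bicolored $2$-cells is the dual $2$-skeleton of a graph representing $S^{d-2}$, hence simply connected --- though note that for $d=3$ your justification via ``simple connectivity of links of edges'' fails literally, since those links are circles, and one must instead observe that a cycle with its $2$-cell is a disk). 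After this step there remain all the $i$- and $j$-colored edges outside $T'$, roughly $\nu(\Gamma)-g_{\Delta_d\setminus\{i,j\}}+1$ generators, which your procedure never touches; so the presentation you end with has far more generators than $g_{\Delta_d\setminus\{i,j\}}-1$, and they are not ``the bridging edges''. The missing idea is precisely the use of the mixed cycles $\{i,p\}$ and $\{j,p\}$, $p\notin\{i,j\}$, to show that, modulo the generators already killed, all $i$-colored (resp.\ $j$-colored) edges running between the same two collapsed components represent the same element of $\pi_1$ --- equivalently, Gagliardi's decomposition of $M$ into regular neighborhoods of the subcomplex spanned by the $i$- and $j$-labelled vertices (a graph with two vertices and $g_{\Delta_d\setminus\{i,j\}}$ edges, whose $\pi_1$ is free of rank $g_{\Delta_d\setminus\{i,j\}}-1$) and of the complementary $(d-2)$-dimensional subcomplex, together with a surjectivity argument. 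Without that step the claimed bijection between ``all components but one'' and a generating set is not established.
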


\subsection{Regular Genus of PL-manifolds}\label{sec:genus}
The notion of {\it regular genus} is strictly related to the existence of {\it regular embeddings} of crystallizations into closed surfaces, i.e., embeddings whose regions are bounded by the images of bi-colored cycles, with colors consecutive in a fixed permutation of the color set.
More precisely, according to \cite{ga81},  if $(\Gamma, \gamma)$ is a crystallization of an orientable (resp., non-orientable) PL $d$-manifold $M$ ($d \geq 3$), then for each cyclic permutation  $\varepsilon= (\varepsilon_0, \varepsilon_1, \varepsilon_2, \dots , \varepsilon_d)$ of $\Delta_d$,  a regular embedding $i_\varepsilon : \Gamma \hookrightarrow F_\varepsilon$ exists,  where $F_{\varepsilon}$ is the closed orientable (resp., non-orientable) surface with Euler characteristic
$$
\chi_{\varepsilon}(\Gamma)= \sum_{i \in \mathbb{Z}_{d+1}}g_{\{\varepsilon_i,\varepsilon_{i+1}\}} + (1-d) \ \frac{\nu(\Gamma)}{2}.$$

In the orientable (resp., non-orientable) case, the integer
$$\rho_{\varepsilon}(\Gamma) = 1 - \chi_{\varepsilon}(\Gamma)/2$$
is equal to the genus (resp., half of the genus) of the surface $F_{\varepsilon}$.
Then, the regular genus $\rho (\Gamma)$ of $(\Gamma,\gamma)$  and the regular genus $\mathcal G (M)$ of $M$ are:
$$\rho(\Gamma)= \min \{\rho_{\varepsilon}(\Gamma) \ | \  \varepsilon \ \text{ is a cyclic permutation of } \ \Delta_d\};$$
$$\mathcal G(M) = \min \{\rho(\Gamma) \ | \  (\Gamma,\gamma) \mbox{ is a crystallization of } M\}.$$

From \cite{bc15}, we know the following.
\begin{proposition} \label{prop:lowerbound}
Let $M$ be a (closed connected) PL $4$-manifold and $m$ be the rank of its fundamental group. Then, $ \mathcal G(M) \ \ge \ 2 \chi (M) + 5m -4.$
\end{proposition}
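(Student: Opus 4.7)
The plan is to reduce additivity of $\mathcal{G}$ to a direct arithmetic identity by using the characterization of equality in the genus lower bound given by Theorem \ref{teorem:main}. Since $M_1$ and $M_2$ both admit weak semi-simple crystallizations by hypothesis, Lemma \ref{Theorem:additive} (the closure of this class under graph connected sum) ensures that $M_1 \# M_2$ also admits a weak semi-simple crystallization. Consequently, Theorem \ref{teorem:main} applies simultaneously to $M_1$, $M_2$, and $M_1 \# M_2$, yielding
\begin{align*}
\mathcal{G}(M_i) &= 2\chi(M_i) + 5 m_i - 4 \quad (i = 1, 2), \\
\mathcal{G}(M_1 \# M_2) &= 2\chi(M_1 \# M_2) + 5 m - 4,
\end{align*}
where $m_i$ is the rank of $\pi_1(M_i)$ and $m$ is the rank of $\pi_1(M_1 \# M_2)$.

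Next, I would invoke two standard topological inputs to relate these three formulas. First, applying inclusion–exclusion for the Euler characteristic to the decomposition $M_1 \# M_2 = (M_1 \setminus B^4) \cup_{S^3} (M_2 \setminus B^4)$ gives $\chi(M_1 \# M_2) = \chi(M_1) + \chi(M_2) - 2$, since $\chi(S^4) = 2$. Second, the Seifert–van Kampen theorem provides $\pi_1(M_1 \# M_2) \cong \pi_1(M_1) * \pi_1(M_2)$, and Grushko's theorem on the rank of a free product of finitely generated groups then gives $m = m_1 + m_2$. Substituting both identities and regrouping yields
\begin{align*}
\mathcal{G}(M_1 \# M_2) &= 2\bigl(\chi(M_1) + \chi(M_2) - 2\bigr) + 5(m_1 + m_2) - 4 \\
&= \bigl(2\chi(M_1) + 5 m_1 - 4\bigr) + \bigl(2\chi(M_2) + 5 m_2 - 4\bigr) \\
&= \mathcal{G}(M_1) + \mathcal{G}(M_2),
\end{align*}
which is exactly the claimed identity.

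The main obstacle in this plan is Lemma \ref{Theorem:additive} itself, the verification that the graph connected sum of two weak semi-simple crystallizations is again weak semi-simple. This requires tracking, for each cyclically consecutive triple $\{i, i+1, i+2\} \subset \Delta_4$, how the three-colored subgraph of $(\Gamma_1 \# \Gamma_2, \gamma_1 \# \gamma_2)$ reassembles from those of the two factors after deleting one vertex from each and introducing five new edges (one per color). Combining Proposition \ref{prop:preliminaries}(b), which matches component counts of the relevant complementary two-color subgraphs with generators of $\pi_1$, with Grushko's theorem applied to $\pi_1(M_1) * \pi_1(M_2)$, one expects the count $g_{\{i,i+1,i+2\}}$ in the connected sum to equal $m_1 + m_2 + 1 = m+1$. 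Once this combinatorial claim is secured, the arithmetic above closes the proof without further obstacle; note that orientability of $M_1, M_2$ plays no role beyond guaranteeing that $M_1 \# M_2$ is a well-defined PL $4$-manifold, and both identities $\chi(M_1 \# M_2) = \chi(M_1) + \chi(M_2) - 2$ and $m = m_1 + m_2$ hold in the orientable and non-orientable cases alike.
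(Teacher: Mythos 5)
Your proposal does not address the statement at hand. The statement is Proposition \ref{prop:lowerbound}, the lower bound $\mathcal{G}(M) \geq 2\chi(M) + 5m - 4$ for an arbitrary closed connected PL $4$-manifold $M$; what you have written is a proof of Theorem \ref{theorem:addative}, the additivity of regular genus under connected sum for manifolds admitting weak semi-simple crystallizations. Nothing in your argument produces an inequality valid for all closed connected PL $4$-manifolds: your reasoning only concerns manifolds in the weak semi-simple class, and it establishes an equality for connected sums, not a bound for a single $M$. Worse, as an argument for the proposition it would be circular: Theorem \ref{teorem:main}, which you invoke as your main tool, is itself proved in the paper by combining Lemma \ref{lemma:main1} with Proposition \ref{prop:lowerbound} (the ``if'' direction turns an upper bound $\mathcal{G}(M) \leq 2\chi(M) + 5m - 4$ into an equality precisely by citing the proposition), so the proposition cannot be derived from it.

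For the record, the paper does not reprove the proposition at all; it is quoted from the earlier work of Basak and Casali \cite{bc15}. If you wanted a self-contained argument in the spirit of this paper, the natural route is the computation of Lemma \ref{lemma:main1}: for every crystallization $(\Gamma,\gamma)$ of $M$ and every cyclic permutation $\varepsilon$ of $\Delta_4$ one has $\rho_{\varepsilon}(\Gamma) = 2\chi(M) + 5m - 4 + \sum_{i \in \mathbb{Z}_5}\bigl(g_{\{\varepsilon_i,\varepsilon_{i+2},\varepsilon_{i+4}\}} - m - 1\bigr)$, and each summand is nonnegative by Proposition \ref{prop:preliminaries}(b); minimizing over $\varepsilon$ and over all crystallizations gives $\mathcal{G}(M) \geq 2\chi(M) + 5m - 4$. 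That is the combinatorial content your proposal is missing entirely.
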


\section{Proof of Main Results and some Consequences}

\begin{lemma}\label{lemma:vertex}
Let $(\Gamma,\gamma)$ be a crystallization of a closed connected PL $4$-manifold $M$. Then $\nu(\Gamma) = 6 \chi(M)+2\sum_{0 \leq i <j<k\leq 4} g_{\{i,j,k\}}-30.$
\end{lemma}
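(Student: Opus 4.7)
The plan is to express the $f$-vector of $\mathcal{K}(\Gamma)$ in terms of the component counts $g_B$ and $\nu(\Gamma)$, plug this into $\chi(M) = \sum_{i=0}^{4} (-1)^i f_i(\mathcal{K}(\Gamma))$, and then invoke Proposition \ref{prop:preliminaries}(a) to eliminate the bicoloured counts in favour of the tricoloured counts $g_{\{i,j,k\}}$.

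First I would use the standard dictionary between coloured graphs and their associated cell-complexes: for each $C \subset \Delta_4$ of size $k+1$, the $k$-faces of $\mathcal{K}(\Gamma)$ carrying the label set $C$ are in bijection with the connected components of $\Gamma_{\Delta_4 \setminus C}$. Contractedness of $(\Gamma,\gamma)$ gives $g_{\Delta_4 \setminus \{c\}}=1$ for each colour $c$, and $5$-regularity together with the proper edge-colouring forces each monochromatic subgraph to be a perfect matching on $\nu(\Gamma)$ vertices. These observations yield
\begin{align*}
f_0 &= 5, \\
f_1 &= \sum_{0 \le i<j<k \le 4} g_{\{i,j,k\}}, \\
f_2 &= \sum_{0 \le i<j \le 4} g_{\{i,j\}}, \\
f_3 &= \tfrac{5\,\nu(\Gamma)}{2}, \\
f_4 &= \nu(\Gamma).
\end{align*}
Substituting into the Euler-characteristic identity yields the first key relation
$$ f_1 - f_2 \;=\; 5 - \tfrac{3\,\nu(\Gamma)}{2} - \chi(M). $$

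Next I would sum the identity of Proposition \ref{prop:preliminaries}(a) over all $\binom{5}{3}=10$ three-element subsets of $\Delta_4$. Since each unordered pair $\{i,j\} \subset \Delta_4$ is contained in exactly three such triples, the summation collapses to $2 f_1 = 3 f_2 - 5\,\nu(\Gamma)$. Eliminating $f_2$ between this and the Euler-characteristic relation above produces a linear equation among $\nu(\Gamma)$, $f_1$, and $\chi(M)$, and solving for $\nu(\Gamma)$ delivers precisely the claimed identity
$$ \nu(\Gamma) \;=\; 6\,\chi(M) + 2 \sum_{0 \le i<j<k \le 4} g_{\{i,j,k\}} - 30. $$
I do not foresee any genuine obstacle: the whole argument is elementary bookkeeping once the $f$-vector is correctly rewritten in terms of $g$-values. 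The only step that requires a moment of care is the computation $f_3 = 5\,\nu(\Gamma)/2$, which relies on the fact that each colour class is a perfect matching; after that, all remaining work is linear algebra in the three unknowns $\nu(\Gamma), f_1, f_2$.
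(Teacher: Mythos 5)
Your proof is correct and amounts to essentially the same computation as the paper's: the paper quotes the Dehn--Sommerville relations for $\mathcal{K}(\Gamma)$ and solves the resulting linear system, while your perfect-matching observation gives the relation $2f_3=5f_4$ and your summation of Proposition \ref{prop:preliminaries}(a) over the ten triples reproduces exactly the remaining Dehn--Sommerville relation $2f_1-3f_2+4f_3-5f_4=0$, after which the elimination is identical. The only (harmless) difference is that you derive these relations combinatorially from the coloured-graph structure rather than citing them.
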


\begin{proof}
Let $2p$ be the number of vertices of $\Gamma$ and $X=\mathcal{K}(\Gamma)$ be the corresponding simplicial cell-complex. Then the Dehn-Sommerville equations in dimension four yield:
	$$
	\begin{array}{lllllllllll}
		f_0 (X) &-& f_1(X) &+& f_2(X) &-& f_3(X) &+& f_4(X) &=& \chi(M), \\
		&& 2 f_1(X) &-& 3 f_2(X) &+& 4 f_3(X) &-& 5 f_4(X) &=& 0, \\
		&&&&&& 2 f_3(X) &-& 5 f_4(X) &=& 0. \\
	\end{array}
	$$
	
\noindent Since $f_0 (X) = 5$ (by construction) and $f_4(X)=\nu(\Gamma)=2p$, the following equality holds:

$$2p = 6 \chi(M)+2f_1(\mathcal{K}(\Gamma))-30.$$

\noindent Since $f_1(\mathcal{K}(\Gamma)) = \sum_{0 \leq i <j<k\leq 4} g_{\{i,j,k\}}$, we have $2p = 6 \chi(M)+2\sum_{0 \leq i <j<k\leq 4} g_{\{i,j,k\}}-30.$
\end{proof}

\begin{lemma}\label{lemma:main1}
Let $(\Gamma,\gamma)$ be a crystallization of a closed connected PL $4$-manifold $M$
and $m$ be the rank of the fundamental group of $M$. Then, for any cyclic permutation $\varepsilon=(\varepsilon_0,\dots,\varepsilon_4=4)$ of the color set $\Delta_4$ we have $\rho_{\varepsilon} (\Gamma) = 2 \chi (M) + 5m -4 +\sum_{i \in \mathbb{Z}_5} (g_{\{\varepsilon_i,\varepsilon_{i+2},\varepsilon_{i+4}\}}-m-1)$.
\end{lemma}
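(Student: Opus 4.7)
The plan is to express $\rho_\varepsilon(\Gamma)$ solely in terms of $\chi(M)$ and the single quantity
$$T_2 \;:=\; \sum_{i \in \mathbb{Z}_5} g_{\{\varepsilon_i,\varepsilon_{i+2},\varepsilon_{i+4}\}},$$
showing that $\rho_\varepsilon(\Gamma) = 2\chi(M) + T_2 - 9$; the target formula then follows from the cosmetic identity
$$T_2 - 9 \;=\; 5m - 4 \,+\, \sum_{i \in \mathbb{Z}_5}\bigl(g_{\{\varepsilon_i,\varepsilon_{i+2},\varepsilon_{i+4}\}} - (m+1)\bigr).$$
Specialising the Euler-characteristic formula of Subsection \ref{sec:genus} to $d=4$, I get $\chi_\varepsilon(\Gamma) = S_1 - \tfrac{3}{2}\nu(\Gamma)$ where $S_1 := \sum_{i \in \mathbb{Z}_5} g_{\{\varepsilon_i,\varepsilon_{i+1}\}}$, hence $\rho_\varepsilon(\Gamma) = 1 + \tfrac{3}{4}\nu(\Gamma) - \tfrac{1}{2}S_1$, and the task reduces to evaluating $S_1$ from the combinatorial data available in the paper.

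To that end, I apply Proposition \ref{prop:preliminaries}(a) — the identity $2g_{\{i,j,k\}} = g_{\{i,j\}} + g_{\{i,k\}} + g_{\{j,k\}} - \nu(\Gamma)/2$ — in two different aggregations. First, summing it over the five skip-one triples $\{\varepsilon_i,\varepsilon_{i+2},\varepsilon_{i+4}\}$, in which each distance-$2$ pair appears twice and each distance-$1$ pair once, produces $2T_2 = S_1 + 2S_2 - \tfrac{5}{2}\nu(\Gamma)$, where $S_2 := \sum_{i \in \mathbb{Z}_5} g_{\{\varepsilon_i,\varepsilon_{i+2}\}}$. Second, summing over all ten triples of $\Delta_4$, where each pair belongs to exactly three triples, yields $2(T_1+T_2) = 3(S_1+S_2) - 5\nu(\Gamma)$, with $T_1$ the analogous sum over the five consecutive triples $\{\varepsilon_i,\varepsilon_{i+1},\varepsilon_{i+2}\}$. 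Combined with Lemma \ref{lemma:vertex}, which rearranges to $T_1 + T_2 = \tfrac{1}{2}\nu(\Gamma) - 3\chi(M) + 15$, these three linear relations let me eliminate $S_2$ and $T_1$ and solve for $S_1 = \tfrac{3}{2}\nu(\Gamma) + 20 - 4\chi(M) - 2T_2$. Substituting gives $\chi_\varepsilon(\Gamma) = 20 - 4\chi(M) - 2T_2$, hence $\rho_\varepsilon(\Gamma) = 2\chi(M) + T_2 - 9$, as required. The computation is routine linear algebra once the three identities are in hand; the only real choice is deciding to reduce everything to $T_2$ (motivated by the shape of the desired formula), and there is no substantive obstacle to overcome.
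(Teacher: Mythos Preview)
Your proof is correct and in fact considerably more streamlined than the paper's own argument. The paper proceeds by writing Proposition~\ref{prop:preliminaries}(a) as a $10\times 10$ linear system $AX=B$ for the ten quantities $g_{\{i,j\}}$ in terms of the ten $g_{\{i,j,k\}}$'s, explicitly inverts $A$, then tabulates for each of the twelve cyclic permutations $\varepsilon$ the coefficients $\sum_{i}c^{\varepsilon_i\varepsilon_{i+1}}_{klr}$ appearing in front of each $t_{\{k,l,r\}}:=g_{\{k,l,r\}}-(m+1)$, and finally reads off from the table the pattern $\tfrac{2}{3}\sum t_{\{\varepsilon_i,\varepsilon_{i+1},\varepsilon_{i+2}\}}-\tfrac{1}{3}\sum t_{\{\varepsilon_i,\varepsilon_{i+2},\varepsilon_{i+4}\}}$. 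You, by contrast, exploit the cyclic symmetry from the outset: grouping everything into the four aggregates $S_1,S_2,T_1,T_2$ reduces the problem to three linear relations (two aggregated instances of Proposition~\ref{prop:preliminaries}(a) plus Lemma~\ref{lemma:vertex}), from which $S_1$ is solved directly in terms of $T_2$, $\chi(M)$ and $\nu(\Gamma)$. Your route avoids the explicit matrix inverse and the permutation-by-permutation table; the paper's heavier computation has the incidental by-product of giving each individual $g_{\{i,j\}}$ explicitly, but that extra information is not used anywhere in the sequel.
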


\begin{proof}
From Proposition \ref{prop:preliminaries} $(b)$, we know that $g_{\{i,j,k\}} \geq m+1$ for any distinct $i,j,k \in \Delta_4$. Therefore, let us assume that $g_{\{i,j,k\}}=(m+1)+ t_{\{i,j,k\}}$, for $t_{\{i,j,k\}}  \in \mathbb{Z}$ and $t_{\{i,j,k\}} \geq 0$. Thus, from Lemma \ref{lemma:vertex}, we have $\nu(\Gamma) = 6 \chi(M)+2\sum_{0 \leq i <j<k\leq 4} g_{\{i,j,k\}}-30= 6 \chi(M)+20(m+1)-30 +2\sum_{0 \leq i <j<k\leq 4} t_{\{i,j,k\}}$. We set $\nu(\Gamma) = 2\bar{p} + 2q$, where $2\bar{p}=6 \chi(M)+20(m+1)-30$ and $q = \sum_{0 \leq i <j<k\leq 4} t_{\{i,j,k\}}$. Therefore, from the definition of the regular genus, we can deduce that

\begin{equation}\label{eq:rho}
\rho_{\varepsilon} (\Gamma) = 1 + \frac{3}{4} \nu(\Gamma) - \frac{1}{2}\sum \limits_{i\in \mathbb{Z}_5} g_{\{\varepsilon_i, \varepsilon_{i+1}\}}= 1 + \frac{3(\bar p + q)}{2} - \frac{1}{2}\sum \limits_{i\in \mathbb{Z}_5} g_{\{\varepsilon_i, \varepsilon_{i+1}\}}. 
\end{equation}
  
Now, by Proposition \ref{prop:preliminaries}(a), we know that $2g_{ijk}=g_{ij}+g_{ik}+g_{jk}-\frac{\nu(\Gamma)}{2}$ for any distinct $i,j,k \in \Delta_4$.
Thus, we have $g_{ij}+g_{ik}+g_{jk}=2g_{ijk}+(\bar p+q)$ for $0 \leq i <j<k\leq 4$. This gives ten linear equations which can be written in the following form.

$$ AX=B,$$

\noindent where

 $$A = \begin{bmatrix}
1 & 1 & 0 & 0 & 1 & 0 & 0 & 0 & 0 & 0\\
1 & 0 & 1 & 0 & 0 & 1 & 0 & 0 & 0 & 0\\
1 & 0 & 0 & 1 & 0 & 0 & 1 & 0 & 0 & 0\\
0 & 1 & 1 & 0 & 0 & 0 & 0 & 1 & 0 & 0\\
0 & 1 & 0 & 1 & 0 & 0 & 0 & 0 & 1 & 0\\
0 & 0 & 1 & 1 & 0 & 0 & 0 & 0 & 0 & 1\\
0 & 0 & 0 & 0 & 1 & 1 & 0 & 1 & 0 & 0\\
0 & 0 & 0 & 0 & 1 & 0 & 1 & 0 & 1 & 0\\
0 & 0 & 0 & 0 & 0 & 1 & 1 & 0 & 0 & 1\\
0 & 0 & 0 & 0 & 0 & 0 & 0 & 1 & 1 & 1\\
	\end{bmatrix}, X=
	\begin{bmatrix}
	g_{\{0,1\}}\\
	g_{\{0,2\}}\\
	g_{\{0,3\}}\\
	g_{\{0,4\}}\\
	g_{\{1,2\}}\\
	g_{\{1,3\}}\\
	g_{\{1,4\}}\\
	g_{\{2,3\}}\\
	g_{\{2,4\}}\\
	g_{\{3,4\}}\\
	\end{bmatrix} \mbox{and } B=
	\begin{bmatrix}
	2g_{\{0,1,2\}}+ \bar p+q\\
	2g_{\{0,1,3\}}+ \bar p+q\\
	2g_{\{0,1,4\}}+ \bar p+q\\
	2g_{\{0,2,3\}}+ \bar p+q\\
	2g_{\{0,2,4\}}+ \bar p+q\\
	2g_{\{0,3,4\}}+ \bar p+q\\
	2g_{\{1,2,3\}}+ \bar p+q\\
	2g_{\{1,2,4\}}+ \bar p+q\\
	2g_{\{1,3,4\}}+ \bar p+q\\
	2g_{\{2,3,4\}}+ \bar p+q\\
	\end{bmatrix}.
	$$
\noindent Therefore,

$$ X= A^{-1}B,$$

\noindent where
 $$A^{-1}=
 \begin{bmatrix}
1/3 & 1/3 & 1/3 & -1/6 & -1/6 & -1/6 & -1/6 & -1/6 & -1/6 & 1/3\\
1/3 & -1/6 & -1/6 & 1/3 & 1/3 & -1/6 & -1/6 & -1/6 & 1/3 & -1/6\\
-1/6 & 1/3 & -1/6 & 1/3 & -1/6 & 1/3 & -1/6 & 1/3 & -1/6 & -1/6\\
-1/6 & -1/6 & 1/3 & -1/6 & 1/3 & 1/3 & 1/3 & -1/6 & -1/6 & -1/6\\
1/3 & -1/6 & -1/6 & -1/6 & -1/6 & 1/3 & 1/3 & 1/3 & -1/6 & -1/6\\
-1/6 & 1/3 & -1/6 & -1/6 & 1/3 & -1/6 & 1/3 & -1/6 & 1/3 & -1/6\\
-1/6 & -1/6 & 1/3 & 1/3 & -1/6 & -1/6 & -1/6 & 1/3 & 1/3 & -1/6\\
-1/6 & -1/6 & 1/3 & 1/3 & -1/6 & -1/6 & 1/3 & -1/6 & -1/6 & 1/3\\
-1/6 & 1/3 & -1/6 & -1/6 & 1/3 & -1/6 & -1/6 & 1/3 & -1/6 & 1/3\\
1/3 & -1/6 & -1/6 & -1/6 & -1/6 & 1/3 & -1/6 & -1/6 & 1/3 & 1/3\\
	\end{bmatrix}.$$	
	
\noindent Since
$g_{\{i,j,k\}}=(m+1)+t_{\{i,j,k\}}$ for any distinct $i,j,k \in \Delta_4$, we consider
$$B = M+T,$$	

\noindent where

$$M= \begin{bmatrix}
	2(m+1)+ \bar p+q\\
	2(m+1)+ \bar p+q\\
	2(m+1)+ \bar p+q\\
	2(m+1)+ \bar p+q\\
	2(m+1)+ \bar p+q\\
	2(m+1)+ \bar p+q\\
	2(m+1)+ \bar p+q\\
	2(m+1)+ \bar p+q\\
	2(m+1)+ \bar p+q\\
	2(m+1)+ \bar p+q\\
	\end{bmatrix} \mbox{ and }
	T= \begin{bmatrix}
	2t_{\{0,1,2\}}\\
	2t_{\{0,1,3\}}\\
	2t_{\{0,1,4\}}\\
	2t_{\{0,2,3\}}\\
	2t_{\{0,2,4\}}\\
	2t_{\{0,3,4\}}\\
	2t_{\{1,2,3\}}\\
	2t_{\{1,2,4\}}\\
	2t_{\{1,3,4\}}\\
	2t_{\{2,3,4\}}\\
	\end{bmatrix}.$$
	
\noindent Thus,
$$X = A^{-1}M + A^{-1} T.$$
Therefore,
$g_{\{i,j\}}=\frac{2(m+1)+\bar p+q}{3} + 2\sum_{0 \leq k <l<r\leq 4}c_{klr}^{ij}t_{\{k,l,r\}}$, where $c_{klr}^{ij}$ is the element of $A^{-1}$ corresponding to
$\{i,j\}$-row and $\{k,l,r\}$-column of $A^{-1}$. Thus for a given permutation 
$\varepsilon= (\varepsilon_0, \varepsilon_1, \varepsilon_2, \varepsilon_3, \varepsilon_4=4)$ of $\Delta_4$, we have 

$$\sum_{i \in \mathbb{Z}_5} g_{\{\varepsilon_i, \varepsilon_{i+1}\}}=\frac{5}{3}(2(m+1)+\bar p+q)+ 2\sum_{0 \leq k <l<r\leq 4}(\sum_{i \in \mathbb{Z}_5}  c_{klr}^{\varepsilon_i \varepsilon_{i+1}})t_{\{k,l,r\}}.$$ 

Therefore, from Equation \eqref{eq:rho} we get
\begin{align*}
\rho_{\varepsilon} (\Gamma) &= 1 + \frac{3(\bar p + q)}{2} - \frac{5}{6}(2(m+1)+\bar p+q)- \sum_{0 \leq k <l<r\leq 4}(\sum_{i \in \mathbb{Z}_5}  c_{klr}^{\varepsilon_i \varepsilon_{i+1}})t_{\{k,l,r\}}\\
&=1+\frac{2\bar p - 5(m+1)}{3} +\frac{2q}{3}-\sum_{0 \leq k <l<r\leq 4}(\sum_{i \in \mathbb{Z}_5}  c_{klr}^{\varepsilon_i \varepsilon_{i+1}})t_{\{k,l,r\}} \\
&= 2 \chi (M) + 5m -4 +\frac{2}{3 }\sum_{0 \leq k <l<r\leq 4}t_{\{k,l,r\}}-\sum_{0 \leq k <l<r\leq 4}(\sum_{i \in \mathbb{Z}_5}  c_{klr}^{\varepsilon_i \varepsilon_{i+1}})t_{\{k,l,r\}}.
\end{align*}

Now, for a given permutation $\varepsilon= (\varepsilon_0, \varepsilon_1, \varepsilon_2, \varepsilon_3, \varepsilon_4=4)$ of $\Delta_4$, we have the following values of $\sum_{i \in \mathbb{Z}_5}c_{klr}^{\varepsilon_i\varepsilon_{i+1}}$ which is a coefficient of $t_{\{k,l,r\}}$ in $\sum_{0 \leq k <l<r\leq 4}(\sum_{i \in \mathbb{Z}_5}  c_{klr}^{\varepsilon_i \varepsilon_{i+1}})t_{\{k,l,r\}}$.

 \begin{center}
\begin{tabular}{|c|c|c|c|c|c|c|c|c|c|c|}
\hline
$\sum_{i \in \mathbb{Z}_5}c_{klr}^{\varepsilon_i\varepsilon_{i+1}}$, $klr=$ & $012$ & $013$ & $014$ & $023$ & $024$ & $034$ & $123$ & $124$ & $134$ & $234$ \\
\hline
\hline
$\varepsilon=(0,1,2,3,4)$ & 2/3 & -1/3 & 2/3 & -1/3 & -1/3 & 2/3 & 2/3 & -1/3 & -1/3 & 2/3 \\
\hline
$\varepsilon=(0,1,3,2,4)$ & -1/3 & 2/3 & 2/3 & -1/3 & 2/3 & -1/3 & 2/3 & -1/3 & -1/3 & 2/3 \\
\hline
$\varepsilon=(0,2,3,1,4)$ & -1/3 & -1/3 & 2/3 & 2/3 & 2/3 & -1/3 & 2/3 & -1/3 & 2/3 & -1/3 \\
\hline
$\varepsilon=(0,2,1,3,4)$ & 2/3 & -1/3 & -1/3 & -1/3 & 2/3 & 2/3 & 2/3 & -1/3 & 2/3 & -1/3 \\
\hline
$\varepsilon=(0,3,1,2,4)$ & -1/3 & 2/3 & -1/3 & -1/3 & 2/3 & 2/3 & 2/3 & 2/3 & -1/3 & -1/3 \\
\hline
$\varepsilon=(0,3,2,1,4)$ & -1/3 & -1/3 & 2/3 & 2/3 & -1/3 & 2/3 & 2/3 & 2/3 & -1/3 & -1/3 \\
\hline
$\varepsilon=(1,3,0,2,4)$ & -1/3 & 2/3 & -1/3 & 2/3 & 2/3 & -1/3 & -1/3 & 2/3 & 2/3 & -1/3 \\
\hline
$\varepsilon=(1,0,3,2,4)$ & -1/3 & 2/3 & 2/3 & 2/3 & -1/3 & -1/3 & -1/3 & 2/3 & -1/3 & 2/3 \\
\hline
$\varepsilon=(1,0,2,3,4)$ & 2/3 & -1/3 & 2/3 & 2/3 & -1/3 & -1/3 & -1/3 & -1/3 & 2/3 & 2/3 \\
\hline
$\varepsilon=(1,2,0,3,4)$ & 2/3 & -1/3 & -1/3 & 2/3 & -1/3 & 2/3 & -1/3 & 2/3 & 2/3 & -1/3 \\
\hline
$\varepsilon=(2,1,0,3,4)$ & 2/3 & 2/3 & -1/3 & -1/3 & -1/3 & 2/3 & -1/3 & 2/3 & -1/3 & 2/3 \\
\hline
$\varepsilon=(2,0,1,3,4)$ & 2/3 & 2/3 & -1/3 & -1/3 & 2/3 & -1/3 & -1/3 & -1/3 & 2/3 & 2/3 \\
\hline
\end{tabular}
 \end{center} 

\noindent Thus, $$\sum_{0 \leq k <l<r\leq 4}(\sum_{i \in \mathbb{Z}_5}  c_{klr}^{\varepsilon_i \varepsilon_{i+1}})t_{\{k,l,r\}}= \frac{2}{3}\sum_{i \in \mathbb{Z}_5} t_{\{\varepsilon_i,\varepsilon_{i+1},\varepsilon_{i+2}\}}-\frac{1}{3} \sum_{i \in \mathbb{Z}_5} t_{\{\varepsilon_i,\varepsilon_{i+2},\varepsilon_{i+4}\}}.$$
 
\noindent Since $\sum_{0 \leq k <l<r\leq 4}t_{\{k,l,r\}}= \sum_{i \in \mathbb{Z}_5} t_{\{\varepsilon_i,\varepsilon_{i+1},\varepsilon_{i+2}\}}+ \sum_{i \in \mathbb{Z}_5} t_{\{\varepsilon_i,\varepsilon_{i+2},\varepsilon_{i+4}\}}$, we have

\begin{align*}
\rho_{\varepsilon} (\Gamma) &= 2 \chi (M) + 5m -4 +\sum_{i \in \mathbb{Z}_5} t_{\{\varepsilon_i,\varepsilon_{i+2},\varepsilon_{i+4}\}}.
\end{align*}

Now, the result follows from this.
\end{proof}

\medskip

\noindent {\em Proof of Theorem}  \ref{teorem:main}.
Let $M$ admit a weak semi-simple crystallization. Thus, we have $g_{\{i,i+1,i+2\}}=m+1$ for $i \in \Delta_4$ (addition in subscript of $g$ is modulo 5).
Therefore, from Lemma \ref{lemma:main1}, we have $\rho_{\varepsilon} (\Gamma) = 2 \chi (M) + 5m -4 +\sum_{i \in \mathbb{Z}_5} (g_{\{\varepsilon_i,\varepsilon_{i+2},\varepsilon_{i+4}\}}-m-1)$ for any cyclic permutation $\varepsilon=(\varepsilon_0,\dots,\varepsilon_4=4)$ of the color set $\Delta_4$. Let $\varepsilon=(2,0,3,1,4)$. Then  $g_{\{\varepsilon_i,\varepsilon_{i+2},\varepsilon_{i+4}\}}=g_{\{\varepsilon_i,\varepsilon_{i}+1,\varepsilon_{i}+2\}}=m+1$ (addition in subscript of $g$ is modulo 5). Therefore, $\rho_{\varepsilon} (\Gamma) = 2 \chi (M) + 5m -4$ and hence, $\mathcal{G}(M) \leq 2 \chi (M) + 5m -4$. Thus, by Proposition \ref{prop:lowerbound}, $\mathcal{G}(M) = 2 \chi (M) + 5m -4$.

On the other hand, let $\mathcal G(M)=  2 \chi (M) + 5m -4$. Then, from Lemma \ref{lemma:main1} we have,
there exist a crystallization $\Gamma$ and a cyclic permutation $\varepsilon=(\varepsilon_0,\dots,\varepsilon_4=4)$ of the color set $\Delta_4$ such that $\sum_{i \in \mathbb{Z}_5} (g_{\{\varepsilon_i,\varepsilon_{i+2},\varepsilon_{i+4}\}}-m-1)=0$. Therefore, $g_{\{\varepsilon_i,\varepsilon_{i+2},\varepsilon_{i+4}\}}=m+1$ (addition in subscript of $g$ is modulo 5). Let $\bar \Gamma$ be a colored graph, obtained from $\Gamma$, by replacing the colors $(\varepsilon_0,\varepsilon_2,\varepsilon_4,\varepsilon_1,\varepsilon_3)$ by $(0,1,2,3,4)$. Then $\bar{\Gamma}$ is a crystallization of $M$ with $\bar g_{\{i,i+1,i+2\}}=m+1$ for $i \in \mathbb{Z}_5$. Therefore, $M$ admits a weak semi-simple crystallization. \hfill $\Box$

\begin{corollary}\label{cor:min-genus}
If a closed connected PL $4$-manifold $M$ admits a crystallization $(\Gamma,\gamma)$ such that $\nu(\Gamma)\leq 6 \chi(M) + 20 m - 6$, where $m$ is the rank of the fundamental group of $M$ then $\mathcal G(M) =\rho(\Gamma)=  2 \chi (M) + 5m -4$. 
\end{corollary}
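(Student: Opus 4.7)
The strategy is to convert the hypothesis on $\nu(\Gamma)$ into a strong constraint on the excesses $t_{\{i,j,k\}} := g_{\{i,j,k\}} - (m+1) \geq 0$, then apply Lemma \ref{lemma:main1} to select a cyclic permutation of $\Delta_4$ that realizes the lower bound from Proposition \ref{prop:lowerbound}.

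\textbf{Step 1 (vertex bound $\Rightarrow$ excess bound).} Using Lemma \ref{lemma:vertex} and the fact that there are $\binom{5}{3}=10$ color triples, I rewrite
\[
\nu(\Gamma) \;=\; 6\chi(M) + 20(m+1) - 30 + 2q \;=\; 6\chi(M) + 20m - 10 + 2q, \qquad q := \!\!\sum_{0\le i<j<k\le 4}\!\! t_{\{i,j,k\}}.
\]
The hypothesis $\nu(\Gamma)\le 6\chi(M)+20m-6$ then forces $q\le 2$. In particular, the set $\mathcal S:=\{T\in\binom{\Delta_4}{3} : t_T>0\}$ of ``bad'' triples has cardinality $|\mathcal S|\le 2$.

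\textbf{Step 2 (reduction via Lemma \ref{lemma:main1}).} For any cyclic permutation $\varepsilon=(\varepsilon_0,\dots,\varepsilon_4)$ of $\Delta_4$, Lemma \ref{lemma:main1} gives
\[
\rho_\varepsilon(\Gamma) \;=\; 2\chi(M)+5m-4 \;+\; \sum_{i\in\mathbb Z_5} t_{\{\varepsilon_i,\varepsilon_{i+2},\varepsilon_{i+4}\}}.
\]
Thus it suffices to exhibit a cyclic ordering $\varepsilon$ in which none of the five ``alternating'' triples $\{\varepsilon_i,\varepsilon_{i+2},\varepsilon_{i+4}\}$ lies in $\mathcal S$; since each of the $10$ triples is either alternating or of the consecutive form $\{\varepsilon_i,\varepsilon_{i+1},\varepsilon_{i+2}\}$, this means: arrange every $T\in\mathcal S$ to occupy three cyclically consecutive positions of $\varepsilon$. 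Equivalently, the complement $P_T:=\Delta_4\setminus T$ (a $2$-element set) must consist of two cyclically adjacent entries of $\varepsilon$.

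\textbf{Step 3 (the combinatorial heart).} I then verify that for any $\mathcal S\subseteq\binom{\Delta_4}{3}$ with $|\mathcal S|\le 2$ such a cyclic ordering exists; this is the only genuinely non-routine point. The cases $|\mathcal S|\le 1$ are trivial. If $\mathcal S=\{T_1,T_2\}$, then $|T_1\cap T_2|\in\{1,2\}$ (two $3$-subsets of a $5$-set must overlap). If $T_1=\{a,b,c\}$ and $T_2=\{a,d,e\}$ share exactly one color, the cyclic order $(b,c,a,d,e)$ places $T_1$ and $T_2$ in consecutive positions $0,1,2$ and $2,3,4$ respectively. If $T_1=\{a,b,c\}$ and $T_2=\{a,b,d\}$ share two colors (with $e$ the leftover), the cyclic order $(c,a,b,d,e)$ works, with $T_1$ in positions $0,1,2$ and $T_2$ in positions $1,2,3$. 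After a cyclic rotation we may assume $\varepsilon_4=4$ as required by Lemma \ref{lemma:main1}.

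\textbf{Step 4 (conclusion).} For the chosen $\varepsilon$, every alternating triple has $t=0$, so $\rho_\varepsilon(\Gamma)=2\chi(M)+5m-4$. Hence $\rho(\Gamma)\le 2\chi(M)+5m-4$, while Proposition \ref{prop:lowerbound} gives the matching lower bound $\mathcal G(M)\ge 2\chi(M)+5m-4$. Since $\mathcal G(M)\le \rho(\Gamma)$, all three quantities coincide. The main (and only subtle) obstacle is the combinatorial argument of Step 3; once it is settled, the corollary follows directly by combining Lemmas \ref{lemma:vertex} and \ref{lemma:main1} with Proposition \ref{prop:lowerbound}.
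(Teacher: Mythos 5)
Your proof is correct and follows essentially the same route as the paper: translate the vertex bound via Lemma \ref{lemma:vertex} into ``at most two triples have $g_{\{i,j,k\}}>m+1$'', find a cyclic ordering avoiding them, and conclude via Lemma \ref{lemma:main1} together with Proposition \ref{prop:lowerbound} (the paper packages this last step as an appeal to Theorem \ref{teorem:main}, which is the same thing). The only difference is that you explicitly verify the combinatorial claim that such a cyclic ordering exists for any one or two bad triples, a point the paper merely asserts; your case analysis there is correct.
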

\begin{proof}
From Lemma \ref{lemma:vertex}, we have $ 6 \chi(M) +2\sum_{0 \leq i <j<k\leq 4}g_{\{i,j,k\}}-30 = \nu(\Gamma) \leq 6 \chi(M) + 20 m - 6$. Therefore, $\sum_{0 \leq i <j<k\leq 4}g_{\{i,j,k\}} \leq 10 m + 12$. On the other hand, by Proposition \ref{prop:preliminaries}(d), we have $g_{\{i,j,k\}}\geq m+1$. Thus, at most two $g_{\{i,j,k\}}>m+1$. Therefore,
there exists a cyclic permutation $(\delta_0,\dots,\delta_4=4)$ of the color set $\Delta_4$ such that $g_{\{\delta_i,\delta_{i+1},\delta_{i+2}\}}=m+1$ for $i \in \Delta_4$ (addition in subscript of $g$ is modulo 5). Let $\bar \Gamma$ be a colored graph, obtained from $\Gamma$, by replacing the colors $\delta_i$ by $i$ for $i \in \mathbb{Z}_5$. Then $\bar{\Gamma}$ is a crystallization of $M$ with $\bar g_{\{i,i+1,i+2\}}=m+1$ for $i \in \mathbb{Z}_5$. Therefore, $M$ admits a weak semi-simple crystallization. Now, the result follows from Theorem \ref{teorem:main}.
\end{proof}

\begin{corollary}\label{corollary:novik-swartz}
Let $\mathbb{F}$ be a field, let $(\Gamma,\gamma)$ be a crystallization of a  closed $\mathbb{F}$-orientable PL $4$-manifold $M$ and, let $\mathcal{T}$ be the corresponding simplicial cell-complex. If  $\mathcal{T}$ satisfies equality in the Novik-Swartz bound with respect to $\mathbb{F}$ then $\mathcal{G}(M)=\rho(\Gamma)=2 \chi (M) + 5m -4$.
 \end{corollary}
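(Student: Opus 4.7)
The plan is to translate the Novik-Swartz equality into the combinatorial condition that defines a weak semi-simple crystallization, and then invoke Theorem \ref{teorem:main}.

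First, I would unpack the Novik-Swartz bound in our setting. Since $\mathcal{T} = \mathcal{K}(\Gamma)$ has $f_0(\mathcal{T}) = 5$ by construction, one computes $h_1(\mathcal{T}) = 0$ and $g_2(\mathcal{T}) = h_2(\mathcal{T}) = f_1(\mathcal{T}) - 10$. The Novik-Swartz bound $g_2 \geq \binom{d+2}{2}\beta_1(M;\mathbb{F})$ for a closed $\mathbb{F}$-orientable PL $4$-manifold thus reads $f_1(\mathcal{T}) \geq 10 + 15\beta_1$. Under the equality hypothesis, combined with the identity $f_1(\mathcal{T}) = \sum_{0 \leq i<j<k \leq 4} g_{\{i,j,k\}}$ established in the proof of Lemma \ref{lemma:vertex}, I obtain $\sum g_{\{i,j,k\}} = 10 + 15\beta_1$.

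Second, using Proposition \ref{prop:preliminaries}(b) (namely $g_{\{i,j,k\}} \geq m+1$) and the general inequality $\beta_1(M;\mathbb{F}) \leq m$, and then invoking the structural rigidity of the Novik-Swartz equality case for $\mathbb{F}$-orientable complexes, I would exhibit a cyclic permutation $\varepsilon$ of $\Delta_4$ for which the five excesses $t_{\{\varepsilon_i,\varepsilon_{i+1},\varepsilon_{i+2}\}} := g_{\{\varepsilon_i,\varepsilon_{i+1},\varepsilon_{i+2}\}} - (m+1)$ ($i \in \mathbb{Z}_5$) all vanish. After relabelling colors as in the proof of Theorem \ref{teorem:main}, this exhibits $(\Gamma,\gamma)$ as a weak semi-simple crystallization of $M$, and that theorem yields $\mathcal{G}(M) = 2\chi(M) + 5m - 4$. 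The equality $\rho(\Gamma) = 2\chi(M) + 5m - 4$ then follows by applying Lemma \ref{lemma:main1} to the same $\varepsilon$, since the spread-out contributions $t_{\{\varepsilon_i,\varepsilon_{i+2},\varepsilon_{i+4}\}}$ are precisely the ones left unconstrained by the weak semi-simple pattern along $\varepsilon$.

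The main obstacle is the second step. A naive substitution of $\beta_1 \leq m$ in $\sum g_{\{i,j,k\}} = 10 + 15\beta_1$ gives only $\sum t_{\{i,j,k\}} \leq 5m$, which a priori permits the excess to be distributed arbitrarily among the ten triples; weak semi-simplicity, by contrast, demands that the excess be confined to the five ``spread-out'' triples of some cyclic permutation. Forcing this pattern requires harnessing the special combinatorial geometry characterizing the Novik-Swartz equality case (typically, membership in a stacked class of manifolds with prescribed first Betti number) in conjunction with $\mathbb{F}$-orientability and the minimality $f_0(\mathcal{T}) = 5$ intrinsic to simplicial cell-complexes arising from crystallizations.
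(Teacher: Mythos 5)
There is a genuine gap, and you have in fact located it yourself: your second step is never carried out, and it cannot be carried out from your first step. By extracting only the $g_2$-part of the Novik--Swartz result you obtain, under the equality hypothesis, $\sum_{0\leq i<j<k\leq 4} g_{\{i,j,k\}}=f_1(\mathcal{T})=10+15\beta_1$, hence merely $\sum t_{\{i,j,k\}}\leq 5m$; as you concede, this allows the excess to be spread over all ten triples, and the appeal to ``structural rigidity of the Novik--Swartz equality case'' (stackedness, etc.) is a placeholder rather than an argument --- nothing of that sort is available in the paper or its references in a form that would confine the excess to the five spread-out triples of a single cyclic permutation.

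The paper closes the argument by elementary $f$-vector arithmetic instead. It reads ``equality in the Novik--Swartz bound'' as equality in all the inequalities $h_j(\mathcal{T})\geq\binom{5}{j}\sum_{i=0}^{j}(-1)^{j-i}\beta_{i-1}$ of \cite[Theorem 6.4]{ns09}. Summing over $j$ and using $\sum_j h_j(\mathcal{T})=f_4(\mathcal{T})=\nu(\Gamma)=2p$ gives $\beta_0+4\beta_1+6\beta_2+4\beta_3+\beta_4=2p$; combining this with $\chi(M)=\sum_i(-1)^i\beta_i$ eliminates $\beta_2$ and, via Poincar\'e duality $\beta_1=\beta_3\leq m$, yields $2p\leq 6\chi(M)+20m-10$. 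This verifies the hypothesis of Corollary \ref{cor:min-genus}, whose proof (through Lemma \ref{lemma:vertex}) shows the total excess $\sum t_{\{i,j,k\}}$ is at most $2$, so at most two triples are ``bad''; one then chooses a cyclic permutation of $\Delta_4$ whose five consecutive triples avoid them, obtaining a weak semi-simple crystallization and concluding by Theorem \ref{teorem:main}. This last combinatorial selection step (which you also need) is exactly what a total excess bounded by a small constant makes possible, and what your bound $\sum t_{\{i,j,k\}}\leq 5m$ does not.
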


\begin{proof}
Set $f_4 (\mathcal{T}) = 2p$, and let 
$h (\mathcal{T}) = (h_0(\mathcal{T}), \ldots ,	h_{5}(\mathcal{T}))$
be the $h$-vector of $\mathcal{T}$, i.e., 
\begin{equation}\label{eq:h-vector}
\sum \limits_{i=0}^{5} h_i(\mathcal{T}) x^{5-i} = 
\sum_{i=0}^5 f_{i-1}(\mathcal{T})(x-1)^{5-i}.
\end{equation}
Then, due to Novik and Swartz 
(cf. \cite[Theorem 6.4]{ns09}), we have 
\begin{equation}\label{novikswartz}
  h_j(X) \geq \binom{5}{j} \sum_{i=0}^{j}(-1)^{j-i} 
  \beta_{i-1}(M,\mathbb{Z}).
\end{equation}
By substituting $x$ by $x+1$ and comparing the coefficients of $x^i$ on 
both sides of Equation \eqref{eq:h-vector}, we obtain $2p = \sum_{i=0}^5 h_{i}(X)$.
Thus, Equation~(\ref{novikswartz}) and the Euler characteristic imply
\begin{eqnarray}\label{novik}
 \begin{array}{lllllllllll}
    \beta_0(M,\mathbb{F}) &+&  4\beta_1(M,\mathbb{F}) &+& 
    6\beta_2(M,\mathbb{F})  &+&  4\beta_3(M,\mathbb{F})  &+& 
    \beta_4(M,\mathbb{F}) &\leq& 2p, \\
    \beta_0(M,\mathbb{F}) & - & \beta_1(M,\mathbb{F}) & + & 
    \beta_2(M,\mathbb{F}) & - & \beta_3(M,\mathbb{F}) & + & 
    \beta_4(M,\mathbb{F}) &=& \chi(M).
\end{array} 
\end{eqnarray}
Since we have equality in the both equations in \eqref{novik}, it follows that
$$ \begin{array}{lllllllllll}
  5\beta_0(M,\mathbb{F}) & - & 10\beta_1(M,\mathbb{F}) & - & 
  10\beta_3(M,\mathbb{F}) & + & 5\beta_4(M,\mathbb{F}) &=& 
  6\chi(M) - 2p.
\end{array} $$
Let $m$ be the rank of the fundamental group of $M$. Then $m \geq 
\beta_1(M,\mathbb{F}) = \beta_3(M,\mathbb{F})$. Thus, $2p \leq 6 
\chi(M) + 10(2m-1)$. Now, the result follows from Corollary \ref{cor:min-genus}.
\end{proof}

\begin{lemma}\label{Theorem:additive}
Let $M_1$ and $M_2$ be two PL $4$-manifolds admitting weak semi-simple crystallizations. Then $M_1\# M_2$ admits a weak semi-simple crystallization.
\end{lemma}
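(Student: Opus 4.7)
Let $(\Gamma_1,\gamma_1)$ and $(\Gamma_2,\gamma_2)$ be weak semi-simple crystallizations of $M_1$ and $M_2$, and pick any vertices $v_s \in V(\Gamma_s)$. The plan is to show that the graph connected sum $(\Gamma_1 \#_{v_1 v_2} \Gamma_2, \gamma_1 \# \gamma_2)$, which by Proposition \ref{prop:preliminaries}(c) is a crystallization of $M_1 \# M_2$, is already weak semi-simple. Let $m_1, m_2$ denote the ranks of $\pi_1(M_1), \pi_1(M_2)$, and $m$ the rank of $\pi_1(M_1 \# M_2)$. Since $d = 4 \geq 3$, Seifert--van Kampen gives $\pi_1(M_1 \# M_2) \cong \pi_1(M_1) * \pi_1(M_2)$, and Grushko's theorem (the rank of a free product of finitely generated groups equals the sum of the ranks) then yields $m = m_1 + m_2$.

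The heart of the argument is the counting identity
\[
g^{(\Gamma_1 \# \Gamma_2)}_{\{i,j,k\}} = g^{(\Gamma_1)}_{\{i,j,k\}} + g^{(\Gamma_2)}_{\{i,j,k\}} - 1
\]
for every triple of distinct colors $i,j,k \in \Delta_4$. To prove this I would use the standard dictionary between a crystallization $(\Gamma, \gamma)$ and its associated simplicial cell complex $\mathcal{K}(\Gamma)$, under which $g_{\{i,j,k\}}$ equals the number of $1$-simplices of $\mathcal{K}(\Gamma)$ whose endpoints carry the two labels in $\Delta_4 \setminus \{i,j,k\}$ (this is exactly the identification $f_1(\mathcal{K}(\Gamma)) = \sum_{0 \le i<j<k \le 4} g_{\{i,j,k\}}$ used in the proof of Lemma \ref{lemma:vertex}). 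Topologically, $\mathcal{K}(\Gamma_1 \# \Gamma_2)$ is obtained from $\mathcal{K}(\Gamma_1)$ and $\mathcal{K}(\Gamma_2)$ by deleting the interiors of the $4$-simplices $\sigma(v_1), \sigma(v_2)$ and identifying the boundary $3$-spheres $\partial \sigma(v_1)$ and $\partial \sigma(v_2)$ via the PL-homeomorphism matching vertex labels. Since the five vertices of each $\partial \sigma(v_s)$ carry the five distinct colors of $\Delta_4$, each boundary contains exactly one edge for every unordered pair $\{l,r\} \subset \Delta_4$; the gluing therefore merges exactly one pair of same-labeled edges per label pair, reducing the total count of edges with labels $\{l,r\}$ by precisely one, which is the claimed identity.

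Applying the identity to the triples $\{i, i+1, i+2\}$ (mod $5$) and using the hypothesis $g^{(\Gamma_s)}_{\{i,i+1,i+2\}} = m_s + 1$ for $s = 1,2$, we obtain
\[
g^{(\Gamma_1 \# \Gamma_2)}_{\{i,i+1,i+2\}} = (m_1 + 1) + (m_2 + 1) - 1 = m + 1
\]
for each $i \in \Delta_4$, so $\Gamma_1 \# \Gamma_2$ is a weak semi-simple crystallization of $M_1 \# M_2$. The main technical point to be careful about is the counting identity itself: a purely graph-theoretic verification is possible (by tracking how the fragments of the $\{i,j,k\}$-components of $\Gamma_1, \Gamma_2$ that contain $v_1, v_2$ are reassembled by the three new colored bridges of the connected sum), but it requires showing that these three bridges always recombine the two fragmented pieces into a single connected component; the topological route above sidesteps this case analysis by reading the edge count off directly from the gluing of the two boundary $3$-spheres $\partial \sigma(v_1), \partial \sigma(v_2)$.
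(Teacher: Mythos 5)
Your proof is correct and follows the same route as the paper: form the graph connected sum $\Gamma^1\#_{uv}\Gamma^2$, which is a crystallization of $M_1\#M_2$ by Proposition \ref{prop:preliminaries}(c), and check that $g_{\{j,j+1,j+2\}}$ becomes $m_1+m_2+1=m+1$ for every $j$. The paper simply asserts the additivity $\bar g_{\{i,j,k\}}=g^1_{\{i,j,k\}}+g^2_{\{i,j,k\}}-1$ and implicitly uses $\mathrm{rk}\,\pi_1(M_1\#M_2)=m_1+m_2$; your justifications of both steps (reading the edge count off the gluing of $\partial\sigma(v_1)$ and $\partial\sigma(v_2)$ in the associated complexes, and Seifert--van Kampen plus Grushko, respectively) correctly supply details the paper leaves unstated.
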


\begin{proof}
For $1\leq i \leq 2$, let $m_i$ be the rank of the fundamental group of $M_i$. Since $M_i$ admits a weak semi-simple crystallization, there exists a crystallization $(\Gamma^i,\gamma^i)$ with color set $\Delta_4$ such that $g^i_{\{j,j+1,j+2\}}=m_i+1$ for $j \in \Delta_4$ (addition in subscript of $g^i$ is modulo 5) and $1\leq i \leq 2$. Let $\bar \Gamma=\Gamma^1 \#_{uv} \Gamma^2$. Then $\bar g_{\{j,j+1,j+2\}}=m_1+m_2+1$ for $j \in \Delta_4$ (addition in subscript of $\bar g$ is modulo 5).
Now, by Proposition \ref{prop:preliminaries}(c), $\bar \Gamma=\Gamma^1 \#_{uv} \Gamma^2$ is a crystallization of $M_1 \# M_2$ for some $u\in V(\Gamma^1)$ and $ v \in V(\Gamma^2)$. Therefore, $M_1\# M_2$ admits a weak semi-simple crystallization.
\end{proof}

Now, by using Theorem \ref{teorem:main}, we have the following.

\begin{corollary}\label{corollary:genus}
For  $i\leq i \leq 2$, if $\mathcal G(M_i)=2 \chi (M_i) + 5m_i -4$, where $m_i$ is the rank of the fundamental group of $M_i$, then $\mathcal{G}(M_1\# M_2) = 2 \chi (M_1 \# M_2) + 5(m_1+m_2)-4$.
\end{corollary}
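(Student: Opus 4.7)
The plan is to chain together Theorem \ref{teorem:main} and Lemma \ref{Theorem:additive}, and then carry out an arithmetic check on the Euler characteristic and fundamental-group rank of a connected sum.

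First I would use the hypothesis $\mathcal G(M_i)=2\chi(M_i)+5m_i-4$ together with the ``only if'' direction of Theorem \ref{teorem:main} to conclude that each $M_i$ admits a weak semi-simple crystallization. Then Lemma \ref{Theorem:additive} immediately yields a weak semi-simple crystallization of $M_1\#M_2$, and the ``if'' direction of Theorem \ref{teorem:main} gives $\mathcal G(M_1\#M_2)=2\chi(M_1\#M_2)+5\,m-4$, where $m$ denotes the rank of $\pi_1(M_1\#M_2)$.

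Next I would identify $m$ and $\chi(M_1\#M_2)$ in terms of the $m_i$ and $\chi(M_i)$. By van Kampen's theorem $\pi_1(M_1\#M_2)\cong \pi_1(M_1)*\pi_1(M_2)$, and Grushko's theorem then gives $m=m_1+m_2$. From the standard formula for the Euler characteristic of a connected sum in dimension four, $\chi(M_1\#M_2)=\chi(M_1)+\chi(M_2)-\chi(S^4)=\chi(M_1)+\chi(M_2)-2$. Substituting both identities produces
\[
\mathcal G(M_1\#M_2)=2\chi(M_1)+2\chi(M_2)-4+5(m_1+m_2)-4=2\chi(M_1\#M_2)+5(m_1+m_2)-4,
\]
which is the stated equality.

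There is no real obstacle here, since all the nontrivial work is already contained in Theorem \ref{teorem:main} and Lemma \ref{Theorem:additive}; the only conceptual point worth being careful about is the appeal to Grushko's theorem to guarantee additivity of ranks under free products, as otherwise one only obtains the inequality $m\le m_1+m_2$, which would not yield the exact formula.
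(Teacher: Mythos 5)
Your proof is correct and follows exactly the route the paper intends (the paper states this corollary with only the remark ``by using Theorem \ref{teorem:main}''): apply the ``only if'' direction of Theorem \ref{teorem:main} to each $M_i$, pass to $M_1\# M_2$ via Lemma \ref{Theorem:additive}, apply the ``if'' direction, and substitute $\chi(M_1\# M_2)=\chi(M_1)+\chi(M_2)-2$ together with additivity of the rank. Your explicit appeal to van Kampen and Grushko to get $\mathrm{rk}\,\pi_1(M_1\# M_2)=m_1+m_2$ makes precise a point the paper (also in the proof of Lemma \ref{Theorem:additive}) leaves implicit.
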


\medskip

\noindent {\em Proof of Theorem}  \ref{theorem:addative}.
For  $1\leq i \leq 2$, let $m_i$ be the rank of the fundamental group of $M_i$. Thus, $\mathcal{G}(M_1\# M_2) = 2 \chi (M_1 \# M_2) + 5(m_1+m_2)-4$. Since $\chi (M_1 \# M_2)=\chi (M_1)+\chi (M_2)-2$, we have $\mathcal{G}(M_1\# M_2) = (2 \chi (M_1) + 5m_1-4)+ (2 \chi (M_2) + 5m_2-4)=\mathcal{G}(M_1)+\mathcal{G}(M_2)$. \hfill $\Box$

\begin{remark}\label{rem:huge-class}
{\rm
From Definition \ref{def:weak}, it is clear that, if a PL 4-manifold $M$ admits a semi simple crystallization (in particular, simple crystallization) then $M$ admits a weak semi simple crystallization. Therefore, from Theorem \ref{teorem:main}, we have
$\mathcal{G}(M) = 2 \chi (M) + 5$ rk$(\pi_1(M))-4$. From \cite{bc15,bj16}, we know  the class of PL 4-manifolds admitting a weak semi simple crystallization, contains  $S^4$, $\mathbb{CP}^{2}$, $S^{2} \times S^{2}$,  $\mathbb{RP}^4,$  the orientable and non-orientable $S^3$-bundles over $S^1$ and the $K3$-surface, together with their connected sums possibly with reverse orientation also.
Moreover,

\begin{itemize}
\item from \cite[Figure 3]{bc15}, we know  $\mathcal{G}(\mathbb{RP}^2\times S^2)=5=2\cdot 2+5 \cdot 1-4= 2 \chi (\mathbb{RP}^2\times S^2) + 5$ rk$(\pi_1(\mathbb{RP}^2\times S^2))-4$, and hence $\mathbb{RP}^2\times S^2$ admits a weak semi-simple crystallization.

\item from \cite[Theorem 1]{ba16}, we know that there exists an orientable (resp., a non-orientable) mapping torus $(S^2\times S^1)_f$ (with fundamental group $\mathbb{Z}\times \mathbb{Z}$) of a map $f:S^2\times S^1 \to S^2\times S^1$ such that  $\mathcal{G}((S^2\times S^1)_f)= 6=2\cdot 0+5 \cdot 2 -4= 2 \chi ((S^2\times S^1)_f) + 5$ rk$(\pi_1(S^2\times S^1)_f))-4$. Therefore, $(S^2\times S^1)_f$ admits a weak semi-simple crystallization.
\end{itemize}
}
\end{remark}

\begin{remark}\label{rem:conjecture}
{\rm
Let $M$ be a closed connected PL $4$-manifold admitting a semi-simple crystallization $(\Gamma,\gamma)$. Then, regular genus of $M$ depends on the number of vertices of $\Gamma$ and we have only finitely many for such PL $4$-manifolds having same regular genus. Thus, we have introduced the notion weak semi-simple crystallization. The regular genus of closed connected PL $4$-manifold admitting a weak semi-simple crystallization does not depend on the number of vertices of its crystallizations. Thus, it may be possible to find infinite number of such PL 4-manifolds having same regular genus. 

Finally, we note that all PL 4-manifolds whose regular genus is known, admit a weak semi-simple crystallization. Thus, we believe that all closed connected (simply-connected) PL $4$-manifolds admit a weak semi-simple crystallization.
Therefore, in virtue of Theorem \ref{theorem:addative}, we believe that the conjecture ``additivity of regular genus under connected sum for closed connected PL $d$-manifold" is true for $d=4$.

}
\end{remark}

\begin{remark}{\rm
Lemma \ref{lemma:main1} has been used only to prove Theorem \ref{teorem:main}. There is a minor (printing) error in the statement (and in the second last line of the proof) of Lemma \ref{lemma:main1} in the published version \cite{ba17} of this article: the value of $\rho_{\varepsilon} (\Gamma)$ was $2 \chi (M) + 5m -4 +\frac{1}{3 }\sum_{i \in \mathbb{Z}_5} (g_{\{\varepsilon_i,\varepsilon_{i+2},\varepsilon_{i+4}\}}-m-1)$ in the published version \cite{ba17}. The corrected value of $\rho_{\varepsilon} (\Gamma)$ is $2 \chi (M) + 5m -4 +\sum_{i \in \mathbb{Z}_5} (g_{\{\varepsilon_i,\varepsilon_{i+2},\varepsilon_{i+4}\}}-m-1)$. However, this minor (printing) error does not effect the proof (and hence the statement) of Theorem \ref{teorem:main}. The author would like to thank Maria Rita Casali for pointing out this error.
}\end{remark}

\noindent {\bf Acknowledgements:}
The author would like to thank Basudeb Datta for some useful comments and suggestions. The author thanks Jonathan Spreer for the idea of the proof of Corollary \ref{corollary:novik-swartz}. The major portion of this work has been done at ISI Bangalore, India under NBHM  Postdoctoral fellowship. The author is supported by SERB Indo-US Postdoctoral Fellowship (2016/ 110-Biplab Basak).  
 
{\footnotesize

}

\end{document}